\documentclass{amsart}
\usepackage{amsmath}
\usepackage{amssymb}

\numberwithin{equation}{section}

\newtheorem{theorem}{Theorem}[section]
\newtheorem{proposition}[theorem]{Proposition}
\newtheorem{corollary}[theorem]{Corollary}
\newtheorem{lemma}[theorem]{Lemma}

\theoremstyle{definition}

\newcommand{\bd}{{B_d}}

\newcommand{\neva}{N}

\newcommand{\aream}{A}
\newcommand{\meal}{\sigma_d}

\newcommand{\dbr}{\mathcal{H}}

\newcommand{\er}{\varepsilon}
\newcommand{\za}{\zeta}

\newcommand{\ph}{\varphi}
\newcommand{\cph}{C_\varphi}
\newcommand{\om}{\omega}
\newcommand{\hol}{\mathcal{H}ol}
\newcommand{\Dbb}{\mathbb D}
\newcommand{\Tbb}{\mathbb T}

\newcommand{\knl}{k}

\newcommand{\Cbb}{\mathbb C}
\newcommand{\Nbb}{\mathbb N}
\newcommand{\spd}{\partial B_d}

\begin{document}

\title[Composition operators]{Composition operators between de Branges-Rovnyak and Hardy spaces}
\author{Evgueni Doubtsov}
\address{St.~Petersburg Department
of Steklov Mathematical Institute, Fontanka 27, St.~Petersburg 191023, Russia}
\email{dubtsov@pdmi.ras.ru}

\author{Andrei V. Vasin}
\address{Admiral Makarov State University of Maritime and Inland Shipping,
Dvinskaya st.~5/7, St.~Petersburg 198035, Russia}
\email{andrejvasin@gmail.com}

\keywords{Composition operator, Hardy space, de Branges-Rovnyak space.}


\begin{abstract}
Let $d\ge 1$ and $\varphi: B_d\to\mathbb{D}$ be a holomorphic function,
where $B_d$ denotes the open unit ball of $\mathbb{C}^d$ and $\mathbb{D} = B_1$.
Let $b: \mathbb{D} \to \mathbb{D}$ be a holomorphic function and
$\mathcal {H}(b)$  denote the corresponding de Branges--Rovnyak space.
We show that compactness of the composition operator $C_\varphi$
from $\mathcal {H}(b)$ to the Hardy space $H^2(B_d)$
is related to natural restrictions on the Nevanlinna counting functions
of the slice-functions $\varphi_\zeta$, $\zeta\in \partial B_d$.
\end{abstract}

\maketitle

\section{Introduction}
For $d\ge 1$, let $\hol(\bd)$ denote the space of holomorphic functions in the unit ball
$\bd = \{z\in\Cbb^d: |z|<1\}$.
Let $\Dbb = B_1$ and $\Tbb = \partial \Dbb$.
Let $\meal$ denote the normalized Lebesgue measure on the unit sphere $\spd$.

For $0<p<\infty$, the classical Hardy space $H^p=H^p(\bd)$ consists of
$f\in \hol(\bd)$ such that
\[
\|f\|_{H^p}^p = \sup_{0<r<1} \int_{\spd} |f(r\za)|^p\, d\meal(\za) < \infty.
\]
As usual, the Hardy space $H^p(\bd)$, $p>0$, is identified with the space
$H^p(\spd)$ of the corresponding boundary values.

\subsection{Composition operators}
Consider a holomorphic mapping $\ph: \bd\to\Dbb$, $d\ge 1$.
It is well known that the composition operator $\cph: f\mapsto f\circ\ph$ maps $H^p(\Dbb)$
into $H^p(\bd)$, $p>0$. 
Therefore, it is natural to ask about characterizations
of those symbols $\ph$ for which
$\cph: H^p(\Dbb)\to H^p(\bd)$ is a compact operator.
For $d=1$, an explicit approach based on the Nevanlinna counting function
was developed in \cite{ShJoel87}.
A different answer, in terms of the Clark measures, was obtained in \cite{CM97}.

\subsection{The space $\dbr(b)$}
Each function $\psi \in L^\infty(\Tbb)$
generates the Toeplitz operator $T_\psi$ on $H^2(\Dbb)$
defined by
\[
T_\psi (f) = P_+ (\psi f),\quad f\in H^2(\Dbb),
\]
where $P_+$ is the Riesz orthogonal projector from $L^2(\Tbb)$ onto $H^2(\Tbb)$. 
To each holomorphic function $b: \Dbb \to \Dbb$, we associate the de Branges--Rovnyak space
\[
\dbr(b) = (I- T_b T_{\overline{b}})^{1/2} H^2,
\]
equipped with the inner product
\begin{equation}\label{e_ipH}
\langle(I- T_b T_{\overline{b}})^{1/2} f, (I- T_b T_{\overline{b}})^{1/2} g\rangle_b = \langle f, g \rangle_{H^2},
\end{equation}
making $\dbr(b)$ a Hilbert space. In formula \eqref{e_ipH}, $f$ and $g$ are taken in $H^2$ such
that $f, g \bot \mathrm{ker}(I- T_b T_{\overline{b}})^{1/2}$.

For further results about the spaces $\dbr(b)$,
see \cite{FM16} and \cite{Sa94}.

\subsection{Composition operators on de Branges--Rovnyak spaces $\dbr(b)$}
In the present work,
we use the Nevanlinna counting function 
to obtain necessary conditions and sufficient conditions
for the compactness of the operator $\cph: \dbr(b) \to H^2(\bd)$.

Let $\phi:\Dbb \to \Dbb$ be a holomorphic function.
Recall that the Nevanlinna counting function $\neva_\phi$ is defined by
\begin{equation}\label{e_neva}
\neva_{\phi}(w) = \sum_{z\in \Dbb:\, \phi(z)=w} \log\frac{1}{|z|}, \quad w\in\Dbb\setminus\{\phi(0)\},
\end{equation}
where each pre-image is counted according to its multiplicity.

For $\za\in\spd$ and a holomorphic mapping $\ph: \bd\to\Cbb$, $d\ge 1$, the slice-function $\ph_\za$ 
is defined as $\ph_\za(z) = \ph (z\za)$, $z\in\Dbb$.

The following theorem is the main result of the present paper.

\begin{theorem}\label{t_cph_tri}
Let $\ph: \bd\to\Dbb$, $d \ge 1$, be a holomorphic function, $\ph(0)=0$.
Let $b: \Dbb\to \Dbb$ be a holomorphic function such that $\dbr(b)$ is infinite dimensional.
Consider the following properties:
\begin{itemize}
  \item [(i)] for some $\gamma\in (0, 1/3)$,
  \begin{equation}\label{e_nevaGam0}
\int_{\spd}\neva_{\ph_\za}(z) \left( \frac{(1-|b(z)|)^\gamma}{1-|z|^2} \right)^2 \, d\meal(\za)\to 0 \quad \textrm{as}\ |z|\to 1-,
\end{equation}
  \item [(ii)] the operator $\cph: \dbr(b) \to H^2(\bd)$ is compact,
  \item [(iii)]
 \begin{equation}\label{e_neva0}
\int_{\spd}\neva_{\ph_\za}(z) \frac{1-|b(z)|^2}{1-|z|^2} \, d\meal(\za)\to 0 \quad \textrm{as}\ |z|\to 1-.
\end{equation} 
\end{itemize}
Then $\mathrm{(i)} \Rightarrow \mathrm{(ii)} \Rightarrow \mathrm{(iii)}$.
\end{theorem}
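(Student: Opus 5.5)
The argument rests on two standard tools: the slice integration formula and the Littlewood--Paley identity for $H^2(\Dbb)$. For $f\in\hol(\Dbb)$ and $\ph(0)=0$, integrating over slices and then changing variables in each slice (Stanton's formula) gives
\[
\|f\circ\ph\|_{H^2(\bd)}^2 = |f(0)|^2 + 2\int_{\spd}\int_{\Dbb} |f'(w)|^2 \neva_{\ph_\za}(w)\, dA(w)\, d\meal(\za),
\]
with $dA$ normalized area measure. We also use Littlewood's subordination fact that $\neva_{\ph_\za}(w)\le \log(1/|w|)$. Writing $\Phi(w)=\int_{\spd}\neva_{\ph_\za}(w)\,d\meal(\za)$, the norm becomes $|f(0)|^2+2\int_\Dbb |f'|^2\Phi\,dA$. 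So everything reduces to weighted Carleson-type estimates for $\Phi$ against $|f'|^2$ for $f\in\dbr(b)$.

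\emph{(ii) $\Rightarrow$ (iii).} We test $\cph$ on normalized reproducing kernels of $\dbr(b)$. The kernel is $k_w(z)=\frac{1-\overline{b(w)}b(z)}{1-\bar w z}$, with $\|k_w\|_b^2=\frac{1-|b(w)|^2}{1-|w|^2}$. Since $\dbr(b)$ is infinite dimensional, $b$ is not a finite Blaschke product, and we need $h_w=k_w/\|k_w\|_b\to0$ weakly as $|w|\to1$. It suffices that $\langle f,h_w\rangle_b=f(w)/\|k_w\|_b\to 0$ on a dense set. Polynomials are dense when $b$ is non-extreme; in the extreme case we use functions of the form $(I-T_bT_{\bar b})g$. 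One should check, or cite from \cite{Sa94}, that $|f(w)|=o(\|k_w\|_b)$ on such a dense set. This weak-null property is the delicate point in this direction.

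Compactness then gives $\|\cph h_w\|_{H^2}\to0$. For the lower bound, we use a sub-mean-value property of $\Phi$: each $\neva_{\ph_\za}$ satisfies the Shapiro sub-mean-value inequality on discs missing $\ph_\za(0)=0$, and this survives integration in $\za$. With $\Delta_w$ the pseudo-hyperbolic disc of radius $1/2$ about $w$, this gives
\[
\|\cph h_w\|^2\gtrsim \frac{1}{|\Delta_w|}\int_{\Delta_w}|h_w'|^2(1-|u|^2)^2\Phi(u)\,dA(u)\cdot\frac{1}{(1-|w|)^{2}}.
\]
A cleaner alternative avoids derivatives by using the identity $\|\cph h_w\|^2\ge$ (an averaged $\Phi(w)$ times $|h_w'|^2$ near $w$). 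Near $w$ we have $|h_w'(u)|^2\asymp \frac{|1-\overline{b(w)}b(u)|^2}{(1-|w|^2)^3(1-|b(w)|^2)}$ up to terms involving $b'$. Controlling the $b'$ term is where the main obstacle lies. I would first try the choice $h_w$ versus $\frac{1-\overline{b(w)}b}{1-\bar w z}$ minus the $b$ part, bounding $|b'|(1-|u|)\lesssim 1-|b(u)|^2$ via Schwarz--Pick. The target lower bound is $\Phi(w)\frac{1-|b(w)|^2}{1-|w|^2}$, which is exactly the quantity in (iii).

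\emph{(i) $\Rightarrow$ (ii).} Take $f_n\to0$ weakly in $\dbr(b)$, with $\|f_n\|_b\le1$, so that $f_n\to0$ locally uniformly. It suffices to show $\int_{|w|>r}|f_n'|^2\Phi\,dA$ is small uniformly in $n$ for $r$ near $1$. For this we need a pointwise estimate for derivatives in $\dbr(b)$. Differentiating the kernel gives $|f'(w)|\le\|f\|_b\|\partial_{\bar w}k_w\|_b$, and
\[
\|\partial k_w\|_b^2\lesssim \frac{1-|b(w)|^2}{(1-|w|)^3}+\frac{|b'(w)|^2}{1-|w|}.
\]
This alone is too crude to integrate against $\Phi$. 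Instead we interpolate: split $|f'|^2=|f'|^{2-2\theta}|f'|^{2\theta}$. We bound one factor pointwise by $\|f\|_b\,(1-|b|)^{\gamma'}(1-|w|)^{-3/2}$-type estimates, and the other factor by the fact that $\dbr(b)\subset H^2$ contractively, so $\int|f'|^2\log\frac1{|w|}dA\le\|f\|^2$. Using the sub-mean-value property of $\Phi$ over $\Delta_w$, the hypothesis $\Phi(w)(1-|b(w)|)^{2\gamma}(1-|w|^2)^{-2}\to0$ turns into a vanishing Carleson-measure condition for $\Phi\,dA$ relative to $\dbr(b)$. The restriction $\gamma<1/3$ should appear exactly when balancing exponents, so that the power of $(1-|b|)$ gained from the derivative bound dominates $(1-|b|)^{2\gamma}$. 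Getting this derivative estimate in $\dbr(b)$ with the right power of $1-|b|$ is the main obstacle. I would first try to derive it from the representation $f=f_+$ with $(f_+,f_-)$ in the model space $K$ of the pair $(b,a)$ (or in $H^2\oplus \overline{\Delta L^2}$ in the extreme case), combined with Schwarz--Pick bounds on $b'$.
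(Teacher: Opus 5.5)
\paragraph{The gap is in (ii)$\Rightarrow$(iii).} You plan to show that $h_w=\knl_w/\|\knl_w\|_b\to 0$ weakly as $|w|\to 1$ along arbitrary approaches, and to control the $b'$-term of $\knl_w'$ by Schwarz--Pick. Neither works in general.

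\begin{itemize}
\item Weak nullness fails. Take $b(z)=(1+z)/2$. It is non-extreme, so $1\in\dbr(b)$ and $\dbr(b)$ is infinite dimensional. Here $\langle 1,h_r\rangle_b=1/\|\knl_r\|_b\to\sqrt2$ as $r\to 1^-$, because $b$ has a finite angular derivative at $1$. So no density argument can give $|f(w)|=o(\|\knl_w\|_b)$.
\item The derivative lower bound fails. At the centre, Schwarz--Pick only yields
\[
|\knl_w'(w)|\ge \frac{(1-|b(w)|^2)\,(|w|-|b(w)|)}{(1-|w|^2)^2}.
\]
This is useless when $|b(w)|$ is close to $|w|$, since then the $b'$-term is of the same order as the main term.
\end{itemize}

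\paragraph{The missing idea.} Argue by contradiction along a sequence $w_n$, $|w_n|\to 1$, on which (iii) fails. Then use Littlewood's inequality $\Phi(w_n)\le \log(1/|w_n|)$, with $\Phi(w)=\int_{\spd}\neva_{\ph_\za}(w)\,d\meal(\za)$ as in your notation. You listed this inequality but never used it. It forces $1-|b(w_n)|^2\ge c'>0$, i.e.\ $|b(w_n)|\le a<1$, and both difficulties then disappear:
\begin{itemize}
\item $|\langle f,h_{w_n}\rangle_b|\lesssim |f(w_n)|(1-|w_n|^2)^{1/2}\to 0$ for every $f\in H^2\supset \dbr(b)$, so $h_{w_n}\to 0$ weakly.
\item $|w_n|-|b(w_n)|\ge (1-a)/2$ for large $n$, so the display gives $|\knl_{w_n}'(w_n)|\gtrsim (1-|w_n|^2)^{-2}$. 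This persists on a fixed pseudo-hyperbolic disc (Lemma~\ref{l_LM}).
\end{itemize}
Stanton's formula and the sub-mean-value property of $\Phi$ then give
\[
\|\cph h_{w_n}\|^2\gtrsim \frac{\Phi(w_n)}{1-|w_n|^2}\ge \Phi(w_n)\,\frac{1-|b(w_n)|^2}{1-|w_n|^2}\ge c,
\]
which contradicts compactness.

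\paragraph{On (i)$\Rightarrow$(ii).} Your plan is unfinished, but the obstacle you name is not real.
\begin{itemize}
\item Your kernel estimate together with Schwarz--Pick, $|b'(w)|\le (1-|b(w)|^2)/(1-|w|^2)$, already gives the pointwise bound you wanted: $|f'(w)|^2\lesssim \|f\|_b^2\,(1-|b(w)|)(1-|w|)^{-3}$.
\item More simply, Schwarz--Pick gives $1-|b(z)|\ge \frac{1-|b(0)|}{1+|b(0)|}(1-|z|)$. Hence for every $\gamma\le 1/2$, hypothesis (i) forces $\Phi(z)=o(1-|z|)$.
\item Compactness then follows from Stanton's formula, the bound $\int_{\Dbb}|f'|^2\log\frac{1}{|w|}\,dA\le \frac12\|f\|_2^2\le\frac12\|f\|_b^2$, and local uniform convergence of weakly null sequences. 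In fact this gives compactness of $\cph$ on all of $H^2(\Dbb)$.
\end{itemize}

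The paper proceeds differently. It uses the Baranov--Fricain--Mashreghi Bernstein inequality (Theorem~\ref{t_2}) with $\om_p\ge A(1-|z|)(1-|b(z)|)^{-\gamma}$ and $p=\frac{1+\gamma}{1-\gamma}$. It then approximates $\cph$ by the finite-rank operators $\cph(I-P_n)$. The restriction $\gamma<1/3$ comes from $p<2$ in that inequality, not from balancing exponents as you guessed. The elementary route gives $\gamma\le 1/2$ with no machinery; the paper's route gives the finer criterion of Proposition~\ref{p_suff} in terms of $\om_p$.
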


For $d=1$, Theorem~\ref{t_cph_tri} was obtained in \cite{FKM19}.
If $b$ is an inner function, then $\dbr(b)$ coincides with the model space $K_b = H^2 \ominus bH^2$.
Characterizations of the compact operators $\cph: K_b \to H^2(\bd)$ were earlier obtained in \cite{LM13} for $d=1$.
See also \cite{Dou25CMB} for the case $d\ge 1$.

To prove Theorem~\ref{t_cph_tri}, we adapt methods from \cite{FKM19}, \cite{LM13} and \cite{ShJoel87}
to the case of several variables.

\subsection*{Organization of the paper}
Auxiliary results are collected in Section~\ref{s_aux}. 
A necessary condition for the compactness of
$\cph: \dbr(b) \to H^2(\bd)$ is given in Section~\ref{s_nec}.
The corresponding sufficient conditions are obtained in Section~\ref{s_suff}.

\section{Auxiliary results}\label{s_aux}
\subsection{Littlewood--Paley identity and its generalizations}\label{ss_HL}

For $f\in H^2(\Dbb)$, the Little\-wood--Paley identity states that
\begin{equation}\label{e_LP}
\|f\|^2_{H^2(\Dbb)} = |f(0)|^2 + 2\int_{\Dbb} |f^\prime(w)|^2 \log\frac{1}{|w|}\, d\aream(w),
\end{equation}
where $\aream$ denotes the area measure on the disk $\Dbb$.

\subsubsection{Stanton's formula}
To study the composition operator generated by a holomorphic symbol $\phi:\Dbb \to \Dbb$,
J.~H.~Shapiro \cite{ShJoel87} used for $f\circ\phi$ an analog of the identity \eqref{e_LP}.
This analog is based on the Nevanlinna counting function $\neva_\phi$ defined by \eqref{e_neva}.
The following Stanton formula is the principal technical tool in Shapiro's argument. 

\begin{theorem}[\cite{ShJoel87}]\label{t_Stn_disk}
Let $\phi: \Dbb\to\Dbb$ be a holomorphic function.
Then
\begin{equation}\label{e_Stntn}
\|f\circ \phi\|^2_{H^2(\Dbb)} = |f(\phi(0))|^2 + 2\int_{\Dbb} |f^\prime(w)|^2
\neva_{\phi}(w) \, d\aream(w).
\end{equation}
\end{theorem}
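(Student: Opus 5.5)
The plan is to combine the Littlewood--Paley identity \eqref{e_LP} with a change of variables adapted to non-injective maps. Fix a holomorphic $f$ on $\Dbb$ and put $g=f\circ\phi$.

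First I apply \eqref{e_LP} to $g$. Since $g'(z)=f'(\phi(z))\phi'(z)$, this gives
\[
\|f\circ\phi\|^2_{H^2(\Dbb)} = |f(\phi(0))|^2 + 2\int_{\Dbb} |f'(\phi(z))|^2\,|\phi'(z)|^2 \log\frac{1}{|z|}\, d\aream(z).
\]
I want this as an identity in $[0,\infty]$, valid for every holomorphic $f$, so that no a priori membership $f\circ\phi\in H^2$ is needed. To get it, I expand $g=\sum a_n z^n$. Then $\int_{\Tbb}|g(r\za)|^2\,dm(\za)=\sum|a_n|^2r^{2n}$, where $m$ is normalized Lebesgue measure on $\Tbb$. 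Next I compute $2\int_{|w|<r}|g'(w)|^2\log\frac{r}{|w|}\,d\aream(w)$ in polar coordinates using orthogonality. This yields the identity on each disk $\{|w|<r\}$. Finally I let $r\to1^-$ by monotone convergence, since the integrand $|g'(w)|^2\log\frac{r}{|w|}$ increases in $r$.

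Second, I perform the non-univalent change of variables $w=\phi(z)$. The claim is that for any Borel $G\ge 0$ on $\Dbb$ and any Borel $h\ge0$,
\[
\int_{\Dbb} G(\phi(z))\,|\phi'(z)|^2 h(z)\, d\aream(z) = \int_{\Dbb} G(w) \sum_{z:\,\phi(z)=w} h(z)\, d\aream(w).
\]
I may assume $\phi$ is nonconstant; otherwise both sides of \eqref{e_Stntn} reduce to $|f(\phi(0))|^2$, because $\neva_\phi=0$ a.e. The critical set $Z=\{\phi'=0\}$ is discrete, hence $\aream$-null. Its image $\phi(Z)$ is countable, so it is also null, and multiplicities therefore play no role almost everywhere. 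On $\Dbb\setminus Z$ the map $\phi$ is locally univalent. I cover $\Dbb\setminus Z$ by countably many disjoint Borel sets $E_j$ on each of which $\phi$ is injective, for instance by intersecting a countable cover of small univalence disks with dyadic squares and disjointifying. On each $E_j$ the usual change of variables applies, with Jacobian $|\phi'|^2$. Summing over $j$ by monotone convergence, $\sum_j \chi_{\phi(E_j)}(w)\,h\big((\phi|_{E_j})^{-1}(w)\big)$ equals the sum of $h$ over the preimages of $w$.

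Third, I apply this with $G=|f'|^2$ and $h(z)=\log(1/|z|)$. The preimage sum becomes $\neva_\phi(w)$ from \eqref{e_neva}, for $w\neq\phi(0)$; the single excluded point is negligible. This gives \eqref{e_Stntn}.

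The main technical point is the second step. One needs the countable injective decomposition, and all manipulations must be carried out with nonnegative integrands so that monotone convergence justifies everything even when both sides are infinite. The Littlewood--Paley step in the extended sense is routine by comparison.
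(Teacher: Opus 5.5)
Your argument is correct and is the standard proof. The paper does not prove this statement itself but quotes it from Shapiro, whose derivation is the one you give: apply the Littlewood--Paley identity \eqref{e_LP} to $f\circ\phi$, then make the non-univalent change of variables $w=\phi(z)$ with weight $\log(1/|z|)$, whose preimage sums produce $\neva_\phi(w)$. Your additional care (working in $[0,\infty]$ via monotone convergence, and the countable injective decomposition off the discrete critical set) fills in routine measure-theoretic details behind that formula and changes nothing in the approach.
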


\begin{corollary}\label{c_Stanton}
Let $\ph: \bd\to\Dbb$, $d\ge 1$, be  a holomorphic function.
Then
\begin{equation}\label{e_Stntn_d}
\|f\circ \ph\|^2_{H^2(\bd)} = |f(\ph(0))|^2 + 2\int_{\Dbb} |f^\prime(w)|^2
\left(\int_{\spd} \neva_{\ph_\za}(w)\,  d\meal(\za) \right)\, d\aream(w).
\end{equation}
\end{corollary}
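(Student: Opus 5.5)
The statement to prove is Corollary~\ref{c_Stanton}, and the plan is to reduce it to Theorem~\ref{t_Stn_disk} by integration in polar coordinates over slices. First I recall the slice formula for the Hardy norm on the ball: for $g\in H^2(\bd)$ one has
\[
\|g\|^2_{H^2(\bd)} = \int_{\spd} \|g_\za\|^2_{H^2(\Dbb)}\, d\meal(\za),
\]
where $g_\za(\lambda)=g(\lambda\za)$. This follows from the rotation invariance of $\meal$:
\[
\int_{\spd} |g(r\za)|^2\, d\meal(\za) = \int_{\spd}\int_{\Tbb} |g(r e^{i\theta}\za)|^2 \frac{d\theta}{2\pi}\, d\meal(\za).
\]
Both sides are nondecreasing in $r$, since $|g|^2$ is subharmonic on each slice. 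Monotone convergence as $r\to 1-$ then gives the identity, in the sense of possibly infinite quantities.

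Next I apply this with $g=f\circ\ph$, where $f\in\hol(\Dbb)$. The slices satisfy $(f\circ\ph)_\za = f\circ \ph_\za$, and each $\ph_\za:\Dbb\to\Dbb$ is holomorphic with $\ph_\za(0)=\ph(0)$. Theorem~\ref{t_Stn_disk}, applied to $\phi=\ph_\za$, gives for every $\za\in\spd$
\[
\|f\circ\ph_\za\|^2_{H^2(\Dbb)} = |f(\ph(0))|^2 + 2\int_{\Dbb} |f'(w)|^2 \neva_{\ph_\za}(w)\, dA(w).
\]
Its right-hand side may be infinite, in which case the left-hand side is as well; this is consistent with Shapiro's argument, which is proved via the Littlewood--Paley identity and the change of variables formula for nonnegative integrands.

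Integrating this identity over $\za\in\spd$ against $\meal$ and exchanging the order of integration by Tonelli's theorem yields \eqref{e_Stntn_d}. Tonelli applies because the integrand $|f'(w)|^2\neva_{\ph_\za}(w)$ is nonnegative.

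The only real obstacle is to justify joint measurability of $(\za,w)\mapsto \neva_{\ph_\za}(w)$. I would handle it through the standard representation of the counting function by Jensen's formula,
\[
\neva_{\phi}(w) = \int_{\Tbb}\log|\phi(e^{i\theta})-w|\,\frac{d\theta}{2\pi} - \log|\phi(0)-w|
\]
for $w\neq\phi(0)$, with boundary values understood as the limit $r\to1-$ of the corresponding integrals over $|\lambda|=r$ with the zeros counted inside. A cleaner route is to write $\neva_\phi$ as the increasing limit of the truncated functions
\[
\neva_{\phi,r}(w)=\sum_{\phi(\lambda)=w,\,|\lambda|<r}\log\frac{r}{|\lambda|}.
\]
Each of these is given by Jensen's formula over the circle of radius $r$, so it is a Borel function of $(\za,w)$ off a null set. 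Passing to the monotone limit preserves measurability, and this suffices for Tonelli.
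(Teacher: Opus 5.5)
Your proposal is correct and follows the paper's proof: apply Theorem~\ref{t_Stn_disk} to each slice $\ph_\za$ and integrate over $\spd$ with respect to $\meal$. You also make explicit two things the paper leaves implicit: the slice identity $\|g\|^2_{H^2(\bd)}=\int_{\spd}\|g_\za\|^2_{H^2(\Dbb)}\,d\meal(\za)$, and the joint measurability of $(\za,w)\mapsto \neva_{\ph_\za}(w)$ needed for Tonelli.
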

\begin{proof}
Let $\za\in \spd$.
Applying Theorem~\ref{t_Stn_disk} with $\phi = \ph_\za$ and integrating with respect to  the normalized Lebesgue measure
$\meal$ on $\spd$, we obtain the required equality \eqref{e_Stntn_d}.
\end{proof}

\subsection{Subharmonic property for the Nevanlinna counting function}

\begin{proposition}[{\cite[Section 4.6]{ShJoel87}}]\label{p_subharm}
Let $w\in\Dbb$ and $\phi: \Dbb\to\Dbb$ be a holomorphic function.
Let $\Delta\subset \Dbb$ be a disk centered at $w$ and such that $\phi(0)\notin \Delta$.
Then
\begin{equation}\label{e_subharm_1}
\neva_{\phi}(w) \le \frac{1}{\aream(\Delta)}\int_{\Delta} \neva_\phi(z)\, d\aream(z).
\end{equation}
\end{proposition}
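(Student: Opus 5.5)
The statement just above is Proposition~\ref{p_subharm}, quoted from \cite[Section 4.6]{ShJoel87}. To prove it I would use the Jensen-type identity expressing $\neva_\phi$ through averages of $\log|\phi-w|$ over circles. Shapiro's proof relies on Littlewood's subordination principle and the following formula. For $w\neq\phi(0)$ and $0<r<1$, set
\[
\neva_{\phi,r}(w)=\sum_{|z|<r,\ \phi(z)=w}\log\frac{r}{|z|}.
\]
Jensen's formula applied to $\phi-w$ on the disk $\{|z|<r\}$ gives
\[
\neva_{\phi,r}(w)=\int_{\Tbb}\log|\phi(r\xi)-w|\,dm(\xi)-\log|\phi(0)-w|,
\]
where $m$ is normalized arc length on $\Tbb$.

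The key observation concerns the dependence on $w$ in this identity. The integral term is a subharmonic function of $w$ on $\Cbb$, since it is an average of the subharmonic functions $w\mapsto\log|\phi(r\xi)-w|$. The correction term $-\log|\phi(0)-w|$ is harmonic on $\Dbb\setminus\{\phi(0)\}$. Hence $\neva_{\phi,r}$ is subharmonic on $\Dbb\setminus\{\phi(0)\}$.

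Since $\phi(0)\notin\Delta$, I would also require $\phi(0)\notin\overline\Delta$, which can be arranged by shrinking $\Delta$ slightly and passing to the limit at the end, with the case $\phi(0)\in\partial\Delta$ handled by monotone limits. Subharmonicity then gives the sub-mean value property on $\Delta$:
\[
\neva_{\phi,r}(w)\le \frac{1}{\aream(\Delta)}\int_\Delta \neva_{\phi,r}\,d\aream.
\]

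Finally I let $r\to1^-$. The functions $\neva_{\phi,r}$ increase pointwise to $\neva_\phi$: each term $\log(r/|z|)$ increases, and new preimages enter the sum as $r$ grows. By the monotone convergence theorem, both sides of the sub-mean value inequality pass to the limit, which yields \eqref{e_subharm_1}.

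The main technical point is the legitimacy of the Jensen formula when $\phi-w$ has zeros on the circle $|z|=r$. This is standard, since Jensen's formula holds for all $r$ with the boundary integral still finite. Alternatively, it suffices to take $r$ outside a countable set and use monotonicity in $r$. Another point to check is that the right-hand side may be $+\infty$ or that $\neva_\phi$ is only locally integrable; neither causes difficulty, because monotone convergence handles infinite values.
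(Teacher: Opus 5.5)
Your proof is correct and follows essentially Shapiro's argument, which the paper quotes without proof. In that argument, Jensen's formula makes each truncated counting function $\neva_{\phi,r}$ subharmonic on $\Dbb\setminus\{\phi(0)\}$, the area sub-mean value inequality is applied on $\Delta$, and monotone convergence as $r\to 1^-$ finishes. Shrinking $\Delta$ is harmless but unnecessary: since $\neva_{\phi,r}\ge 0$, you can integrate the circle-mean inequalities over radii $\rho<R$ with Tonelli's theorem directly on the open disk.
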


\begin{corollary}\label{c_subharm}
Let $w\in\Dbb$ and $\ph: \bd\to\Dbb$ be a holomorphic function.
Let $\Delta\subset\Dbb$ be a disk centered at $w$ and such that $\ph(0)\notin \Delta$.
Then
\begin{equation}\label{e_subharm}
\int_{\spd} \neva_{\ph_\za}(w)\,d\meal(\za) \le \frac{1}{\aream(\Delta)}\int_{\Delta} 
\left(\int_{\spd} \neva_{\ph_\za}(z)\,  d\meal(\za) \right)\, d\aream(z).
\end{equation}
\end{corollary}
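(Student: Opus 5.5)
Corollary~\ref{c_subharm} follows from Proposition~\ref{p_subharm} applied slice by slice and then integrated over $\spd$ with respect to $\meal$.

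Fix $\za\in\spd$ and put $\phi=\ph_\za$. This is a holomorphic self-map of $\Dbb$, since $\ph$ maps $\bd$ into $\Dbb$ and $z\mapsto z\za$ maps $\Dbb$ into $\bd$. Moreover $\ph_\za(0)=\ph(0)$ for every $\za$, so the hypothesis $\ph(0)\notin\Delta$ gives $\phi(0)\notin\Delta$. The disk $\Delta$ does not depend on $\za$, so Proposition~\ref{p_subharm} applies to every slice with the same disk and yields
\[
\neva_{\ph_\za}(w)\le \frac{1}{\aream(\Delta)}\int_{\Delta}\neva_{\ph_\za}(z)\,d\aream(z), \qquad \za\in\spd .
\]

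Next I integrate this inequality over $\spd$ with respect to $\meal$. On the right-hand side I interchange the order of integration using Tonelli's theorem, which is legitimate because the integrand is nonnegative. The resulting expression is exactly the right-hand side of \eqref{e_subharm}.

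The only step needing care is joint measurability of $(\za,z)\mapsto \neva_{\ph_\za}(z)$ on $\spd\times\Delta$, which Tonelli requires. I would handle it with the identity $\neva_\phi(z)=\lim_{r\to1-}\neva_{\phi,r}(z)$, where $\neva_{\phi,r}$ counts only preimages in $|z|<r$. By Jensen's formula, each truncated function $\neva_{\ph_\za,r}(w)$ equals
\[
\int_0^{2\pi}\log|\ph(re^{i\theta}\za)-w|\,\frac{d\theta}{2\pi}-\log|\ph(0)-w|
\]
for $w\ne\ph(0)$. This is a Borel function of $(\za,w)$, and monotone convergence in $r$ then gives measurability of the limit. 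Values at the exceptional point $\ph(0)$, which is not in $\Delta$, are irrelevant.
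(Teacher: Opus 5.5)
Your proposal is correct and follows the paper's proof: apply Proposition~\ref{p_subharm} to each slice $\ph_\za$ (using $\ph_\za(0)=\ph(0)\notin\Delta$), integrate over $\spd$, and interchange the integrals. Your joint-measurability argument via Jensen's formula justifies a step the paper leaves implicit. For the Jensen identity you wrote to hold exactly, the truncated function must be $\sum_{|a|<r,\ \ph_\za(a)=w}\log(r/|a|)$; it still increases to $\neva_{\ph_\za}(w)$ as $r\to1-$, so your conclusion stands.
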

\begin{proof}
Let $\za\in \spd$.
Using Proposition~\ref{p_subharm} with $\phi = \ph_\za$,
integrating with respect to $\meal$ and applying Fubini's theorem, we obtain the required
inequality \eqref{e_subharm}.
\end{proof}

\subsection{Spaces boundedly contained in $H^2$}
Recall
that a Hilbert space $E$ is said to be boundedly contained in $H^2(\Dbb)$ if 
$E \subset H^2(\Dbb)$ and $\|f\|_2 \lesssim \|f\|_E$
for all $f \in E$.

\begin{lemma}[see, for example, {\cite[Lemma 10]{FKM19}}]\label{l_10}
Let $E$ be boundedly contained in $H^2(\Dbb)$. Then the subspace
\[
E_n = \{f \in E: f \ \textrm{has zero of order $n$ at the origin} \}
\]
is closed in $E$. Moreover, the subspace $E_n^\bot$ is finite dimensional.
\end{lemma}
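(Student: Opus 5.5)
The statement to prove is Lemma~\ref{l_10}. I would first prove that $E_n$ is closed and then realize $E_n$ as the kernel of a finite-rank map.

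\emph{Closedness.} For each $k$, the Taylor coefficient functional $f\mapsto \hat f(k)$ is bounded on $H^2(\Dbb)$, since $|\hat f(k)|\le \|f\|_2$. Bounded containment gives $\|f\|_2\lesssim\|f\|_E$, so the restriction of this functional to $E$ is bounded on $E$. Hence $E_n=\bigcap_{k=0}^{n-1}\ker(\hat f(k))$ is an intersection of finitely many kernels of continuous linear functionals, and is therefore closed. Here ``zero of order $n$'' is read as ``zero of order at least $n$''; this is the reading that makes $E_n$ a subspace.

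\emph{Finite codimension.} Define $\Lambda:E\to\Cbb^n$ by $\Lambda f=(\hat f(0),\dots,\hat f(n-1))$. This map is linear and bounded, and $\ker\Lambda=E_n$. So $\Lambda$ induces an injective linear map $E/E_n\to\Cbb^n$, which gives $\dim E/E_n\le n$. Since $E_n$ is closed in the Hilbert space $E$, we have $E=E_n\oplus E_n^\perp$, and $E_n^\perp$ is isomorphic to $E/E_n$. Therefore $\dim E_n^\perp\le n<\infty$.

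I do not expect a real obstacle here. The only point that needs care is noting that point evaluation of Taylor coefficients is continuous on $E$, and that follows directly from the bounded inclusion into $H^2$.
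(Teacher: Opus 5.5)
Your proof is correct. The paper does not prove this lemma itself and only cites \cite[Lemma~10]{FKM19}; your argument is the standard proof: the Taylor coefficient functionals $f\mapsto \hat f(k)$ are continuous on $E$ by the bounded inclusion, and $E_n$ is the common kernel of the $n$ functionals with $k<n$, hence closed with $\dim E_n^\perp\le n$. Your reading of ``zero of order $n$'' as ``order at least $n$'' also matches how the paper uses $\dbr_n(b)$ in the proof of Proposition~\ref{p_suff}.
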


\subsection{Reproducing kernels for $\dbr(b)$}
It is well known that $\dbr(b)$ is contractively contained in $H^2$ and thus it is a reproducing kernel Hilbert space. The reproducing kernel of $\dbr(b)$ at $\lambda\in \Dbb$ is given by the following formula
\[
k^b_\lambda (z) =
\frac{1-b(z) \overline{b(\lambda)}}{1- z\overline{\lambda}}, \quad
z, \lambda \in \Dbb.
\]
It is known that
\[
\|k^b_\lambda\|_b = \left(\frac{1- |b(\lambda)|^2}{1-|\lambda|^2}\right)^{1/2}, \quad
\lambda \in \Dbb.
\] 
Hence, the normalized reproducing kernel is
\[
\widetilde{k}^b_\lambda (z) =
\frac{1-b(z) \overline{b(\lambda)}}{1- z\overline{\lambda}} \left(\frac{1-|\lambda|^2}{1- |b(\lambda)|^2}\right)^{1/2}, \quad
z, \lambda \in \Dbb.
\]

Let $D_\er(w) = \{z\in\Dbb: |z-w| < \er|1- z\overline{w}|\}$, that is,
let $D_\er(w)$ denote the pseudo-hyperbolic $\er$-disk centered at  $w\in \Dbb$.

\begin{lemma}[{\cite[Lemma~1]{LM13} and \cite[Lemma~8]{FKM19}}]\label{l_LM}
Let $b: \Dbb \to\Dbb$ be a holomorphic function.
Let $\{w_n\}_{n=1}^\infty$ be a sequence in $\Dbb$ such that $|w_n|\to 1$ as $n\to \infty$ and
\begin{equation}\label{e_LM}
|b (w_n)| < a
\end{equation}
for some parameter $a\in (0,1)$. Then
\begin{itemize}
\item[(i)] $\widetilde{\knl}_{w_n}^b \overset{w^\ast}\longrightarrow 0$ as $n\to \infty$;
\item[(ii)] there exist $\er>0$, $C>0$ and $n_0\in\Nbb$ such that
\[
|(\knl_{w_n}^b)^\prime(z)| \ge \frac{C}{(1-|w_n|^2)^2}, \quad z\in D_\er(w_n),
\]
\end{itemize}
for all $n\ge n_0$.
\end{lemma}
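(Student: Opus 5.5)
The plan is to prove (i) by testing against a dense set of kernel functions, and (ii) by differentiating $\knl^b_{w_n}$ explicitly and controlling both resulting terms with the Schwarz--Pick lemma.

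\emph{Part (i).} Since $\dbr(b)$ is a Hilbert space and $\|\widetilde{\knl}^b_{w_n}\|_b=1$, weak-star convergence is weak convergence. For a bounded sequence it therefore suffices to check $\langle f,\widetilde{\knl}^b_{w_n}\rangle_b\to 0$ for $f$ in a set with dense linear span. The span of the reproducing kernels $\knl^b_\lambda$, $\lambda\in\Dbb$, is dense, because anything orthogonal to all of them vanishes identically. For $f=\knl^b_\lambda$ we have
\[
\langle \knl^b_\lambda,\widetilde{\knl}^b_{w_n}\rangle_b=\knl^b_\lambda(w_n)\left(\frac{1-|w_n|^2}{1-|b(w_n)|^2}\right)^{1/2}.
\]
Here $|\knl^b_\lambda(w_n)|\le 2/(1-|\lambda|)$, and by \eqref{e_LM} the second factor is at most $(1-|w_n|^2)^{1/2}(1-a^2)^{-1/2}\to0$. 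This proves (i).

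\emph{Part (ii).} Write $w=w_n$ and $\beta=|b(w)|<a$. Differentiating the kernel gives
\[
(\knl^b_w)'(z)=\frac{\overline{w}\,(1-b(z)\overline{b(w)})-b'(z)\overline{b(w)}(1-z\overline{w})}{(1-z\overline{w})^2}.
\]
For $z\in D_\er(w)$, standard pseudo-hyperbolic estimates show that $|1-z\overline w|$ and $1-|z|^2$ both equal $(1-|w|^2)$ times a factor lying in $[1-\eta(\er),1+\eta(\er)]$, where $\eta(\er)\to0$ as $\er\to0$. Hence the denominator is $\le (1+\eta)^2(1-|w|^2)^2$, and it is enough to bound the numerator below by a positive constant. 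By Schwarz--Pick, $b$ contracts the pseudo-hyperbolic distance. So $t=|b(z)|$ satisfies $|t-\beta|\le\eta'(\er)$ with $\eta'(\er)\to0$ uniformly in $\beta\le a$.

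The two numerator terms are estimated as follows. For the first, $|\overline w(1-b(z)\overline{b(w)})|\ge|w|(1-\beta t)$. For the second, Schwarz--Pick gives $|b'(z)|(1-|z|^2)\le 1-t^2$, so
\[
|b'(z)\overline{b(w)}(1-z\overline w)|\le \beta(1-t^2)\frac{|1-z\overline w|}{1-|z|^2}\le \beta(1-t^2)\,\frac{1+\eta}{1-\eta}.
\]
Letting $\er\to0$ and $|w|\to1$, the difference of these bounds tends, uniformly, to $(1-\beta^2)-\beta(1-\beta^2)=(1-\beta)(1-\beta^2)\ge(1-a)^3>0$. Fix $\er$ small enough and $n_0$ so large that $|w_n|$ is close to $1$ for $n\ge n_0$. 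The numerator is then at least $(1-a)^3/2$, which yields (ii) with $C=(1-a)^3/(2(1+\eta)^2)$.

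The main obstacle is exactly this comparison. The term involving $b'$ is not small by itself; it is only smaller than the main term by the fixed margin $(1-\beta)(1-\beta^2)$. So all the $\er$-dependent distortion factors must be tracked uniformly in $\beta\in[0,a]$, rather than merely bounded up to constants.
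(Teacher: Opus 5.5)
Your proof is correct and complete. The paper does not prove this lemma itself; it quotes it from \cite[Lemma~1]{LM13} and \cite[Lemma~8]{FKM19}, so there is no in-paper argument to compare against. Your proposal therefore gives a self-contained proof of exactly the statement used in Section~3. For (i) you test against reproducing kernels; for (ii) you differentiate $\knl^b_w$ explicitly and apply Schwarz--Pick.

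Two small remarks.

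\begin{itemize}
\item[(1)] In (i) you can skip the density step. For $f\in H^2$ one has $|f(w)|^2(1-|w|^2)\to 0$ as $|w|\to 1$. Since $\dbr(b)$ is contractively contained in $H^2$ and $1-|b(w_n)|^2\ge 1-a^2$, this gives $\langle f,\widetilde{\knl}^b_{w_n}\rangle_b\to 0$ directly.
\item[(2)] In (ii), the uniformity you identify as the main obstacle can be made explicit. On $D_\er(w)$ with $\er\le 1/2$, Schwarz--Pick gives $|b(z)-b(w)|\le 2\er$, and the standard pseudo-hyperbolic identities give $|1-z\overline{w}|/(1-|z|^2)\le 1/(1-\er)$. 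Hence your lower bound for the numerator is at least $(1-a)^2(1+a)-(1-|w|)-12\er$. So $\er$ can be chosen depending only on $a$, and $n_0$ depends only on how fast $|w_n|\to 1$.
\end{itemize}
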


\subsection{Bernstein inequalities}
Let $1 < p < 2$ and let $q$ be the conjugate exponent of $p$. Also, let
$\rho(\za) = 1- |b(\za)|^2$, $\za\in\Tbb$, and
\[
\mathfrak{K}^\rho_\lambda(z) = \overline{b(\lambda)}
\frac{
2 b(z)\overline{b(\lambda)}- b^\prime(z) \overline{b^\prime(\lambda)}}{
1-z\overline{\lambda}}, \quad z, \lambda \in\Dbb. 
\]
Finally, put
\[
\om_p(z) = \min
\left\{
\|(k^b_z)^2\|^{-p/(p+1)}_q, 
\|\rho^{1/q} \mathfrak{K}^\rho_z\|^{-p/(p+1)}_q
\right\}, \quad z\in \Dbb.
\]

Recall that a finite positive Borel measure $\mu$ on $\Dbb$ is called a Carleson measure
if there exists a constant $C>0$ such that
\[
\mu(S(\za, h)) \le C h
\]
for all $h>0$ and $\za\in \Tbb$, where $S(\za, h) = \{z\in\Dbb: |z-\za|< h\}$.

\begin{theorem}[{\cite[Theorem 4.1]{BFM10}}]\label{t_2}
Let $\mu$ be a Carleson measure on $\Dbb$, let $1<p<2$ and let $b: \Dbb \to \Dbb$ be a holomorphic function.
Then there exists a constant $C= C(\mu, p)$  such that
\[
\|f^\prime \om_p\|_{L^2(\mu)} \le C\|f\|_b, \quad f\in\dbr(b).
\]
\end{theorem}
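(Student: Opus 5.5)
The plan is to prove the inequality pointwise first and then integrate against $\mu$. The pointwise step combines an integral representation of $f'$ for $f\in\dbr(b)$ with a weighted Hölder inequality; the integration step reduces to a Carleson-type estimate uniform in the boundary point.

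\emph{Step 1: integral formula for $f'$.} Recall the standard description of $\dbr(b)$. Every $f\in\dbr(b)$ has a companion function $f^+\in H^2$ such that $T_{\overline b}f=T_{\overline{f^+}}b$. Moreover $\|f\|_b^2\asymp\|f\|_2^2+\|f^+\|_2^2$, and on $\Tbb$ one has $\rho^{1/2}f^+\in L^2$. Differentiating the reproducing formula $f(\lambda)=\langle f,\knl^b_\lambda\rangle_b$ in $\lambda$ and rewriting the $\dbr(b)$ inner product through $f$ and $f^+$, I expect
\[
f'(\lambda)=\int_{\Tbb}\overline{\za}\,f(\za)\,\overline{(\knl^b_\lambda)^2(\za)}\,dm(\za)
+\int_{\Tbb}\rho(\za)\,f^+(\za)\,\overline{\mathfrak{K}^\rho_\lambda(\za)}\,dm(\za),
\]
possibly up to harmless constants and conjugations. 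This formula is exactly what makes the two kernels $(\knl^b_z)^2$ and $\rho^{1/q}\mathfrak{K}^\rho_z$ appear in the definition of $\om_p$. To justify it, I would check it first on kernel functions $f=\knl^b_w$, where $f^+$ is explicit, and then pass to all of $\dbr(b)$ by density and continuity.

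\emph{Step 2: pointwise weighted Hölder bound.} Write $K_z=(\knl^b_z)^2$ for the first term; the second term is handled identically with $\rho^{1/2}f^+$ in place of $f$ and $\rho^{1/2}\mathfrak{K}^\rho_z$ in place of $K_z$. I apply Cauchy--Schwarz to the splitting $|f||K_z|=\bigl(|f||K_z|^{1/2}\bigr)\cdot|K_z|^{1/2}$, and estimate $\int|K_z|$ by interpolating between $L^1$ and $L^q$. The exponent $p/(p+1)$ in $\om_p$ is designed so that this yields
\[
\om_p(z)^2\,|f'(z)|^2\;\lesssim\;\int_{\Tbb}|f(\za)|^2\,W_z(\za)\,dm(\za)+\int_{\Tbb}\rho|f^+|^2\,\widetilde W_z\,dm .
\]
Here $W_z$ is built from $|K_z|^{s}$ for a suitable $s\in(0,1)$ determined by $p$, and it is normalized by a power of $\|K_z\|_q$ that is absorbed by $\om_p(z)^2$. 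The point of the normalization is that $W_z$ behaves like a Poisson-type kernel concentrated on the Carleson window at $z/|z|$. The pointwise bound $|\knl^b_z(\za)|\le 2/|1-\overline z\za|$ gives $W_z(\za)\lesssim (1-|z|)^{\alpha}|1-\overline z\za|^{-1-\alpha}$ for some $\alpha>0$; this works precisely because $p<2$.

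\emph{Step 3: integration against $\mu$.} Integrating the Step 2 bound in $d\mu(z)$ and applying Fubini, it suffices to prove
\[
\sup_{\za\in\Tbb}\int_{\Dbb}\frac{(1-|z|)^{\alpha}}{|1-\overline z\za|^{1+\alpha}}\,d\mu(z)\le C(\mu,p).
\]
This is the classical characterization of Carleson measures: decompose $\Dbb$ into dyadic annuli around $\za$, apply $\mu(S(\za,h))\le Ch$ on each, and sum a geometric series, which converges since $\alpha>0$. Once this holds, the right-hand side is controlled by $\|f\|_2^2+\|\rho^{1/2}f^+\|_2^2\lesssim\|f\|_b^2$, which is the claimed inequality.

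The main obstacle is Step 2, specifically matching exponents. The power of $\|K_z\|_q$ produced by Hölder must be dominated exactly by $\om_p(z)^{2}$, and the leftover kernel must decay like $|1-\overline z\za|^{-1-\alpha}$ with $\alpha>0$ in a form that is uniform in $b$. I would first try a three-factor Hölder inequality with exponents $(2,q,r)$, choosing $r$ so that the remaining factor is $|K_z|^{s}$ with $s<1$, and then verify the Carleson sum in Step 3 with that $s$.
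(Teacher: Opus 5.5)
\textbf{This route has a genuine gap that cannot be closed by adjusting exponents.} The paper does not prove the statement itself; it quotes it from Baranov--Fricain--Mashreghi. Your Step 1 is in the spirit of the proof there, but Steps 2--3 contain two errors.

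\emph{Step 3 is false.} The estimate you need,
\[
\sup_{\zeta\in\Tbb}\int_{\Dbb}\frac{(1-|z|)^{\alpha}}{|1-\overline z\zeta|^{1+\alpha}}\,d\mu(z)<\infty ,
\]
is a balayage of $\mu$. For Carleson measures a balayage is only in BMO. Take the Carleson measure $\mu=\sum_{n\ge1}2^{-n}\delta_{1-2^{-n}}$. At $\zeta=1$ every term equals $1$, and at $\zeta=e^{i\theta}$ the integral is $\asymp\log(1/|\theta|)$. In your dyadic decomposition the factor $(1-|z|)^\alpha$ sits on the integration variable, so it can be as large as $|1-\overline z\zeta|^\alpha$. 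Each annulus then contributes $O(1)$ and there is no geometric series. The classical characterization has the weight $(1-|w|)^\alpha$ at a fixed $w\in\Dbb$, which is a different statement. A pointwise $L^2$ bound against a Poisson-type kernel followed by Fubini is an $L^1$ Carleson embedding, and it already fails for $b\equiv0$: test it with $f=k_w/\|k_w\|_2$ as $w\to1$.

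\emph{Step 2 also fails.} Decay at scale $1-|z|$ cannot hold, even with constants depending on $b$, because $\omega_p(z)$ can be much larger than $1-|z|$. For $b(z)=z^N$ with $N\ge2$ we have $\rho=0$ on $\Tbb$ and $|k^b_z|\le N$. Hence $\omega_p(z)\ge N^{-2}$ for all $z$, using the natural convention that the $\rho$-entry of the minimum is $+\infty$. Take $f(\zeta)=\zeta-z/|z|\in\dbr(b)$. Then $\omega_p(z)^2|f'(z)|^2\ge N^{-4}$. But $\int_{\Tbb}|f|^2W_z\,dm\to0$ as $|z|\to1$ for any $W_z\lesssim(1-|z|)^\alpha|1-\overline z\zeta|^{-1-\alpha}$.

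\emph{The missing idea is a maximal-function argument at a $b$-dependent scale.}
\begin{itemize}
\item[(1)] Put $\delta(z)=\|(k^b_z)^2\|_q^{-p/(p+1)}$, and define the analogous scale with $\rho^{1/q}\mathfrak{K}^\rho_z$. The bound $|k^b_z(\zeta)|\le 2/|1-\overline z\zeta|$ gives $\delta(z)\gtrsim 1-|z|$.
\item[(2)] Split $\Tbb$ into the arc $J$ of length $\asymp\delta(z)$ centred at $z/|z|$ and its complement. On $J$ use H\"older in $L^p\times L^q$ to get the bound $\|f\chi_J\|_p\|(k^b_z)^2\|_q$. Off $J$ use the pointwise kernel bound and dyadic shells.
\item[(3)] The exponent $p/(p+1)$ is exactly what balances $\delta^{1/p}\|(k^b_z)^2\|_q$ against $\delta^{-1}$. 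With $M$ the Hardy--Littlewood maximal operator on $\Tbb$, this gives $\omega_p(z)|f'(z)|\lesssim (M|f|^p)^{1/p}(\xi)+Mf(\xi)$ for every $\xi$ in the arc of length $1-|z|$ under $z$. The same bound holds with $\rho^{1/2}f^+$ in place of $f$; there H\"older uses $\rho^{1/2-1/q}\le1$.
\item[(4)] Carleson's theorem in distribution form, $\mu\{z:\inf_{\xi\in I_z}g(\xi)>\lambda\}\le C\,m\{g>\lambda\}$, reduces everything to $\|(M|f|^p)^{1/p}\|_{L^2(\Tbb)}\lesssim\|f\|_2$. This holds because $M$ is bounded on $L^{2/p}$. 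That is where $p<2$ enters, not in producing some $\alpha>0$.
\end{itemize}

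\emph{A smaller point in Step 1.} The relation $T_{\overline b}f=T_{\overline{f^+}}b$ is not correct. For general $b$, and in particular for inner $b$, which the statement allows, $f^+$ lies in $H^2(\rho)$ rather than $H^2$. The correct relations are $T_{\overline b}f=P_+(\rho f^+)$ and $\|f\|_b^2=\|f\|_2^2+\|f^+\|_{L^2(\rho)}^2$. Since you only use $\rho^{1/2}f^+\in L^2$, this part is repairable.
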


Also, by Lemma~3.5 from \cite{BFM10}, there exists a constant $A>0$ such that
\begin{equation}\label{e_om_p_A}
\om_p(z)\ge A \frac{1-|z|}{\left(1- |b(z)| \right)^{\frac{p}{q(p+1)}}}, \quad z \in \Dbb.
\end{equation}

\section{A necessary condition}\label{s_nec}

In this section, we prove that (ii) $\Rightarrow$ (iii) in Theorem~\ref{t_cph_tri}.

Assume that $\cph: \dbr(b)\to H^2(\bd)$ is compact but (iii) does not hold.
Then there exists a sequence $\{w_n\}_{n=1}^\infty \subset \Dbb$ such that $|w_n|\to 1$ and
\begin{equation}\label{e_neva_Not0}
\int_{\spd}\neva_{\ph_\za}(w_n) \frac{1-|b(w_n)|^2}{1-|w_n|^2}\, d\meal(\za)\ge c> 0.
\end{equation}
By the Littlewood subordination principle \cite{L25}, $\neva_{\ph_\za}(w_n) \le \log\frac{1}{|w_n|}$,
hence, \eqref{e_neva_Not0} implies \eqref{e_LM} for some $a\in (0,1)$.
Therefore, Lemma~\ref{l_LM} is applicable.

Sequentially applying \eqref{e_Stntn_d}, Lemma~\ref{l_LM}(ii) and Corollary~\ref{c_subharm}, we obtain
\begin{equation}\label{e_rker_estim}
\begin{aligned}
\|C_\ph \widetilde{\knl}_{w_n}^b \|^2_{H^2(\bd)}
    &\ge \frac{1}{\|\knl_{w_n}\|^2} \int_{\Dbb} |\knl^\prime_{w_n}(z)|^2
\left(\int_{\spd} \neva_{\ph_\za}(z)\,  d\meal(\za) \right)\, d\aream(z)\\
    &\ge \int_{D(w_n, \er)} \frac{C}{(1- |w_n|^2)^{3}}
\left(\int_{\spd} \neva_{\ph_\za}(z)\,  d\meal(\za) \right)\, d\aream(z)\\
    &\ge \frac{C_\er}{1- |w_n|^2} \int_{\spd} \neva_{\ph_\za}(w_n)\,d\meal(\za).
\end{aligned}
\end{equation}
Now, recall that $C_\ph$ is a compact operator, 
thus, $\|C_\ph \widetilde{\knl}_{w_n}^b(z)\| \to 0$ by Lemma~\ref{l_LM}(i).
Therefore, \eqref{e_rker_estim} contradicts \eqref{e_neva_Not0}.
The proof of the implication (ii) $\Rightarrow$ (iii) is finished.

\section{Sufficient conditions}\label{s_suff}

In this section, we prove that (i) $\Rightarrow$ (ii) in Theorem~\ref{t_cph_tri}.

\begin{proposition}\label{p_suff}
Let $\ph: \bd \to \Dbb$ be a holomorphic function, $\ph(0)=0$.
Let $b:\Dbb \to \Dbb$ be a holomorphic function such that $\dbr(b)$ is infinite dimensional.
Assume that there exists $p\in (1,2)$ such that
\begin{equation}\label{e_om2Neva0}
\om_p^{-2}(z) \int_{\spd} N_{\ph_\za} (z)\, d\meal(\za) \to 0\quad \textrm{as}\ |z|\to 1-.
\end{equation}
Then the operator $\cph: \dbr(b) \to H^2(\bd)$ is compact.
\end{proposition}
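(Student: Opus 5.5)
We follow Shapiro's scheme, adapted to $\dbr(b)$ as in \cite{FKM19}. Compactness is established by showing that $\|C_\ph f_j\|_{H^2(\bd)}\to 0$ for every bounded sequence $\{f_j\}\subset\dbr(b)$ with $f_j\to 0$ weakly. Since $\dbr(b)$ is a reproducing kernel Hilbert space contractively contained in $H^2$, weak convergence to zero implies that $f_j\to 0$ and $f_j'\to 0$ uniformly on compact subsets of $\Dbb$. Write $N(w)=\int_{\spd}\neva_{\ph_\za}(w)\,d\meal(\za)$. Because $\ph(0)=0$, Corollary~\ref{c_Stanton} gives
\[
\|C_\ph f_j\|^2_{H^2(\bd)} = |f_j(0)|^2 + 2\int_{\Dbb}|f_j'(w)|^2 N(w)\,d\aream(w).
\]

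Fix $\er>0$ and choose $r\in(0,1)$ such that $N(w)\le \er\,\om_p^2(w)$ for $r<|w|<1$; this is possible by \eqref{e_om2Neva0}. Split the integral over $|w|\le r$ and $r<|w|<1$. On the disk $\{|w|\le r\}$ we use the bound $N(w)\le\log(1/|w|)$ from Littlewood subordination, which is integrable. Together with $f_j'\to0$ uniformly there, this shows the inner part tends to $0$; likewise $|f_j(0)|^2\to 0$. For the outer part,
\[
\int_{r<|w|<1}|f_j'|^2N\,d\aream \le \er\int_{\Dbb}|f_j'(w)|^2\om_p^2(w)\,d\aream(w).
\]
Apply Theorem~\ref{t_2} with $\mu$ equal to area measure $\aream$ on $\Dbb$, which is clearly a Carleson measure since $\aream(S(\za,h))\lesssim h^2\le h$. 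This bounds the right-hand side by $\er C^2\|f_j\|_b^2\le \er C^2\sup_j\|f_j\|_b^2$. Letting $j\to\infty$ and then $\er\to0$ gives $\|C_\ph f_j\|\to0$, so $C_\ph$ is compact.

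The main point needing care is whether the argument really requires the infinite-dimensionality hypothesis and Lemma~\ref{l_10}. The route above does not seem to need them, since Theorem~\ref{t_2} already controls $f'$ globally. If one prefers the paper's style, an alternative is to write $C_\ph = C_\ph P_n + C_\ph(I-P_n)$, where $P_n$ is the projection onto the finite-dimensional space $E_n^\bot$ from Lemma~\ref{l_10}, so that $C_\ph P_n$ is finite rank. One then estimates $\|C_\ph|_{E_n}\|$, using the zero of order $n$ at the origin to make the contribution from $|w|\le r$ small uniformly in the unit ball of $E_n$; this requires a Schwarz-lemma type estimate $|f'(w)|\lesssim r^{n-1}\|f\|_b$ on $|w|\le r$. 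I expect the only delicate steps to be justifying the local uniform convergence of $f_j'$ and checking that $N$ is locally integrable near $0$, both of which are routine.
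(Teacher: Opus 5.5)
Your argument is correct. Its analytic core is the same as the paper's: Stanton's formula (Corollary~\ref{c_Stanton}), a split of the integral at a radius $r$ chosen from the hypothesis, Littlewood's bound $\neva_{\ph_\za}(w)\le\log(1/|w|)$ on the inner disk, and Theorem~\ref{t_2} with $\mu=\aream$ on the outer annulus.

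What differs is how compactness is extracted. The paper works as follows:
\begin{itemize}
\item it uses Lemma~\ref{l_10} to project onto $\dbr_n(b)$, the functions vanishing to order $n$ at $0$;
\item it proves $\|\cph P_n\|\to 0$ by bounding Taylor coefficients, $|\langle g_n, w^k\rangle_2|\le\|g_n\|_b\le 1$, so that $g_n'$ is small on $|w|<R$ uniformly over the unit ball;
\item it then approximates $\cph$ in norm by the finite-rank operators $\cph(I-P_n)$.
\end{itemize}
You instead verify complete continuity, which suffices on a Hilbert space. You get smallness on the inner disk from the locally uniform convergence of $f_j'$, which follows from weak convergence via the reproducing kernels and Montel's theorem.

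Your route is slightly leaner: it needs neither Lemma~\ref{l_10} nor the uniform coefficient estimates. You are also right that the infinite-dimensionality hypothesis plays no role; the paper's proof does not actually use it either. The paper's route, in exchange, yields an explicit finite-rank approximating sequence.

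One small correction to your optional alternative. The estimate $|f'(w)|\lesssim r^{n-1}\|f\|_b$ on $|w|\le r$ cannot hold with a constant independent of $n$: for $b=0$ one has $\dbr(b)=H^2$, and $f(w)=w^n$ gives $f'(r)=nr^{n-1}$. What is true, and suffices, is $|f'(w)|\le \|f\|_b\sum_{k\ge n}k r^{k-1}$, which tends to $0$ as $n\to\infty$. This is exactly the bound the paper uses.
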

\begin{proof}
Let $\dbr_n (b) = \{f\in\dbr(b): f
\textrm{\ has zero of order $n$ at the origin}  \}$.
Recall that $\dbr(b)$ is contractively contained in $H^2(\Dbb)$, thus,
by Lemma~\ref{l_10}, we can define the corresponding orthogonal projector
\[
P_n: \dbr(b) \to \dbr_n(b).
\]
By Lemma~\ref{l_10}, the orthogonal complement of $\dbr_n(b)$ is finite dimensional.
We claim that
\begin{equation}\label{e_Pn0}
\|\cph P_n\|_{\dbr(b) \to H^2(\bd)} \to 0\quad\textrm{as\ } n\to \infty.
\end{equation}
Then the operator $\cph$ is compact, since it is approximable
by finite rank operators $\cph (I-P_n)$.

To verify property \eqref{e_Pn0}, fix $\er>0$ and $f\in \dbr(b)$, $\|f\|_{b} = 1$.
Put $g_n = P_n f$, then we have $\|g_n\|_{b} \le 1$.
Also, given $R<1$, there exists $n(R, \er)\in\Nbb$ independent of $f$ and such that
\begin{equation}\label{e_g_er}
|g_n(w)| < \er,\ |g^\prime(w)|< \er\quad \textrm{for}\ |w|<R\
\textrm{and for all}\ n>n(R, \er).
\end{equation}
Indeed, we have $g_n \in \dbr_n(b)$.
Thus, the Taylor series of $g_n$ at the origin has the following form:
\[
g_n(w) = \sum_{k=n}^\infty \frac{g_n(0)}{k!} w^k.
\]
Observe that
\[
\left| \frac{g_n(0)}{k!}\right| = |\langle g_n, w^k \rangle_2|
\le \|g_n\|_2 \|w^k\|_2 = \|g_n\|_2 \le \|g_n\|_b \le \|f\|_b =1.
\]
Thus, for $|w|\le R$, we obtain
\[
|g_n(w)| \le \sum_{k=n}^\infty |w|^k = \frac{|w|^n}{1-|w|} \le \frac{R^n}{1-R}.
\]
Therefore, there exists $n_1(R, \er)$ such that $|g_n| < \er$
for all $n > n_1(R, \er)$.

Also, we have
\[
g_n^\prime (w) = \sum_{k=n}^\infty \frac{g_n(0)}{k!} k w^{k-1}.
\]
Thus,
\[
|g_n^\prime (w)| \le \sum_{k=n}^\infty k |w|^{k-1} \le \sum_{k=n}^\infty k R^{k-1}.
\]
Therefore, there exists $n_2(R, \er)$ such that $|g_n^\prime| < \er$
for all $n > n_2(R, \er)$.
Hence, \eqref{e_g_er} holds.

Now, using Stanton's formula \eqref{e_Stntn_d}, we have
\[
\begin{split}
   \frac{1}{2} \|\cph P_n f \|_2^2 &= \int_{|z|<R} |g_n^\prime|^2 \int_{\spd} N_{\ph_\za}(z)\, d\meal(\za) dA(z)\\
     &\quad + \int_{R\le |z|< 1} |g_n^\prime|^2 \int_{\spd} N_{\ph_\za}(z)\, d\meal(\za) dA(z)\\
     &\le \sup_{|z|<R} |g_n^\prime(z)|^2  \cdot \int_{|z|<R}  \int_{\spd} N_{\ph_\za}(z)\, d\meal(\za) dA(z)\\
     &\quad + \sup_{R\le |z|< 1} \left( \om_p^{-2}(z) \int_{\spd} N_{\ph_\za}(z)\, d\meal(\za) \right) 
      \cdot \int_{R\le |z|< 1} |g_n^\prime|^2 \om_p^2(z)\\
     &= I_1 + I_2.
\end{split}
\]
Below we estimate $I_2$ and $I_1$.

Applying \eqref{e_om2Neva0}, select $R<1$ such that
\[
\sup_{R\le |z|< 1} \om_p^{-2}(z) \int_{\spd} N_{\ph_\za}(z)\, d\meal(\za) < \er.
\]
Next, the area measure on $\Dbb$ is a Carleson measure and $\|g_n\|_b \le 1$,
hence, by Theorem~\ref{t_2}, there exists a constant $C>0$ such that
\begin{equation}\label{e_Carle}
\int_{\Dbb} |g_n^\prime|^2 \om_p^2(z)\, dA(z) \le C
\end{equation}
for all $n\in \Nbb$. Thus,
\[
I_2 \le C\er
\]
for all $n\in \Nbb$.

Now, we consider $I_1$. We have
\[
\sup_{|z|<R} |g_n^\prime(z)|^2  < \er
\]
for all $n> n(R, \er)$. 
Also, by Littlewood's inequality,
\[
|N_{\ph_\za}(z)| \le \log\frac{1}{|z|}, \quad z\in\Dbb\setminus \{0\},
\]
since $\ph_\za(0)=0$ for all $\za\in\spd$.
Hence,
\[
\int_{\spd} \int_{|z|<R} N_{\ph_\za}(z)\, dA(z)\, d\meal(\za) \le C.
\]
Therefore,
\[
I_1 \le C\er
\]
for all sufficiently large $n$.

In sum, we obtain $I_1 + I_2 \le C \er$ for all sufficiently large $n$.
Thus, for all $f\in\dbr(b)$ with $\|f\|_b =1$, we have $\|\cph P_n f\|_2^2 \le C\er$ 
with an absolute constant $C>0$ and for all sufficiently large $n$.
Hence, \eqref{e_Pn0} holds. The proof of the proposition is finished.
\end{proof}

\begin{corollary}\label{c_suff}
Let $\ph: \bd \to \Dbb$ be a holomorphic function, $\ph(0)=0$.
Let $b:\Dbb \to \Dbb$ be a holomorphic function such that $\dbr(b)$ is infinite dimensional.
Assume that there exists $\gamma\in (0, \frac{1}{3})$ such that
\begin{equation}\label{e_gamNeva0}
\left(\frac{(1-|b(z)|)^\gamma}{1-|z|^2} \right)^2 \int_{\spd} N_{\ph_\za} (z)\, d\meal(\za) 
\to 0\quad \textrm{as}\ |z|\to 1-.
\end{equation}
Then the operator $\cph: \dbr(b) \to H^2(\bd)$ is compact.
\end{corollary}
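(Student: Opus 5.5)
The plan is to derive the corollary from Proposition~\ref{p_suff}. I would choose $p\in(1,2)$ so that the exponent in \eqref{e_om_p_A} equals $\gamma$, and then show that \eqref{e_gamNeva0} implies \eqref{e_om2Neva0}.

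First I handle the exponent. With $q=p/(p-1)$ we have
\[
\frac{p}{q(p+1)} = \frac{p-1}{p+1}.
\]
The map $p\mapsto (p-1)/(p+1)$ is continuous and strictly increasing on $(1,2)$, and its range is exactly $(0,1/3)$. This explains the restriction $\gamma\in(0,1/3)$. So for the given $\gamma$ there is a unique $p\in(1,2)$ with $(p-1)/(p+1)=\gamma$, namely $p=(1+\gamma)/(1-\gamma)$.

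Next I compare weights. For this $p$, inequality \eqref{e_om_p_A} gives
\[
\om_p(z)\ge A\,\frac{1-|z|}{(1-|b(z)|)^{\gamma}}.
\]
Since $1-|z|^2\le 2(1-|z|)$, it follows that
\[
\om_p^{-2}(z)\le \frac{4}{A^2}\left(\frac{(1-|b(z)|)^\gamma}{1-|z|^2}\right)^2, \quad z\in\Dbb.
\]
We may take $b(z)\ne$ unimodular throughout, since $|b|<1$ in $\Dbb$; hence the right-hand side is finite and positive. Multiplying by the nonnegative quantity $\int_{\spd} N_{\ph_\za}(z)\,d\meal(\za)$ and applying \eqref{e_gamNeva0} yields \eqref{e_om2Neva0} as $|z|\to 1-$.

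Finally, the other hypotheses of Proposition~\ref{p_suff} ($\ph(0)=0$ and $\dbr(b)$ infinite dimensional) coincide with those of the corollary. Proposition~\ref{p_suff} therefore gives compactness of $\cph$.

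The only real point to check is the exponent algebra: the quantity in \eqref{e_om_p_A} must be exactly $\gamma$, and we need $p\in(1,2)$ so that both \eqref{e_om_p_A} and Theorem~\ref{t_2} apply. Since the lower bound \eqref{e_om_p_A} is used only in the direction $\om_p^{-2}\lesssim(\cdots)^2$, no further estimates are needed. This also gives (i) $\Rightarrow$ (ii) of Theorem~\ref{t_cph_tri}: the integral in \eqref{e_nevaGam0} equals the expression in \eqref{e_gamNeva0}, because the weight does not depend on $\za$.
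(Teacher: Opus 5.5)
Your proposal is correct and is essentially the paper's proof: you make the same choice $p=(1+\gamma)/(1-\gamma)\in(1,2)$, so that the exponent in \eqref{e_om_p_A} equals $\gamma$, you dominate $\om_p^{-2}$ by the weight in \eqref{e_gamNeva0} in the same way, and you then apply Proposition~\ref{p_suff}. The one addition is that you spell out the harmless comparison $1-|z|^2\le 2(1-|z|)$, which the paper leaves implicit.
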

\begin{proof}
Put $p = \frac{1+\gamma}{1-\gamma}$. Observe that $p\in (1,2)$.
Let $q$ denote the conjugate exponent of $p$.
Then
\[
\gamma = \frac{p-1}{p+1} = \frac{p}{q(p+1)}.
\]
By \eqref{e_om_p_A}, we have
\[
\om_p (z) \ge A\frac{1-|z|}{(1-|b(z)|)^\gamma}.
\]
Therefore, \eqref{e_gamNeva0} implies that
\[
\om_p^{-2}(z) \int_{\spd} N_{\ph_\za}(z)\, d\meal(\za) \to 0
\]
as $|z|\to 1$. Applying Proposition~\ref{p_suff}, we conclude that the operator
$\cph: \dbr(b) \to H^2(\bd)$ is compact.
\end{proof}

Finally, observe that Corollary~\ref{c_suff} coincides with the implication (i)$\Rightarrow$(ii).
Thus, the proof of Theorem~\ref{t_cph_tri} is finished.

\providecommand{\bysame}{\leavevmode\hbox to3em{\hrulefill}\thinspace}
\providecommand{\MR}{\relax\ifhmode\unskip\space\fi MR }
\providecommand{\MRhref}[2]{%
  \href{http://www.ams.org/mathscinet-getitem?mr=#1}{#2}
}
\providecommand{\href}[2]{#2}

\end{document}